\definecolor{darkblue}{RGB}{0,0,160}
\newcommand{\excise}[1]{}%{$\star$\textsc{#1}$\star$}
\newtheorem{thm}{Theorem}[section]
\newtheorem{lemma}[thm]{Lemma}
\newtheorem{prop}[thm]{Proposition}
\newtheorem{alg}[thm]{Algorithm}
\newtheorem{rec}[thm]{Recipe}
\theoremstyle{definition}
\newtheorem{example}[thm]{Example}
\newtheorem{remark}[thm]{Remark}
\newtheorem{defn}[thm]{Definition}
\numberwithin{equation}{section}
\renewcommand\>{\rangle}
\newcommand\<{\langle}
\newcommand\Id{I}
\newcommand\kk{\Bbbk}
\DeclareMathOperator{\supp}{supp} % The support
\newcommand{\inD}[1][\relax]{\def\argone{#1}\def\temprelax{\relax}
  \ifx\argone\temprelax\right.\else\,\middle|#1\right.{}\fi}
\begin{document}

\title{Detecting Binomiality}

\author{Carsten Conradi}
\address{Hochschule für Technik und Wirtschaft\\ Berlin, Germany}
\email{carsten.conradi@htw-berlin.de}

\author{Thomas Kahle}
\address{Otto-von-Guericke Universität\\ Magdeburg, Germany} 
\urladdr{\url{http://www.thomas-kahle.de}}

\thanks{The authors are supported by the research focus dynamical
systems of the state Saxony-Anhalt.}

\date{August 2015}

\makeatletter
  \@namedef{subjclassname@2010}{\textup{2010} Mathematics Subject Classification}
\makeatother

\subjclass[2010]{Primary: 13P15, 37N25; Secondary: 92C42, 13P25, 13P10}
% 13P15   	Solving polynomial systems; resultants
% 13P25   	Applications of commutative algebra (e.g., to
% statistics, control theory, optimization, etc.)
% 13P10   	Gröbner bases; other bases for ideals and modules
% (e.g., Janet and border bases)
% 80A30   	Chemical kinetics [See also 76V05, 92C45, 92E20]
% 37N25   	Dynamical systems in biology [See mainly 92-XX, but
% also 91-XX]
% 92C42   	Systems biology, networks
% 92B99   Mathematical Biology (in general)

\keywords{polynomial systems in biology, binomial ideal, steady
states, chemical reaction networks}

\begin{abstract}
Binomial ideals are special polynomial ideals with many
algorithmically and theoretically nice properties.  We discuss the
problem of deciding if a given polynomial ideal is binomial.  While
the methods are general, our main motivation and source of examples is
the simplification of steady state equations of chemical reaction
networks.  For homogeneous ideals we give an efficient, Gr\"obner-free
algorithm for binomiality detection, based on linear algebra only.  On
inhomogeneous input the algorithm can only give a sufficient condition
for binomiality.  As a remedy we construct a heuristic toolbox that
can lead to simplifications even if the given ideal is not binomial.
\end{abstract}

\maketitle
\setcounter{tocdepth}{1}
\tableofcontents

\section{Introduction}
\label{sec:intro}

Non-linear algebra is a mainstay in modern applied mathematics and
across the sciences.  Very often non-linearity comes in the form of
polynomial equations which are much more flexible than linear
equations in modeling complex phenomena.  The price to be paid is that
their mathematical theory---commutative algebra and algebraic
geometry---is much more involved than linear algebra.  Fortunately,
polynomial systems in applications often have special structures.  In
this paper we focus on \emph{sparsity}, that is, polynomials having
few terms.

The sparsest polynomials are monomials.  Systems of monomial equations
are a big topic in algebraic combinatorics, but in the view of
modeling they are not much help.  Their solution sets are unions of
coordinate hyperplanes.  The next and more interesting class are
\emph{binomial systems} in which each polynomial is allowed to have
two terms.  Binomials are flexible enough to model many interesting
phenomena, but sparse enough to allow a specialized
theory~\cite{eisenbud96:_binom_ideal}.  The strongest classical
results about binomial systems require one to seek solutions in
algebraically closed field such as the complex numbers.  However, for
the objects in applications (think of concentrations or probabilities)
this assumption is prohibitive.  One often works with non-negative
real numbers and this leads to the fields of real and semi-algebraic
geometry.  New theory in combinatorial commutative algebra shows that
for binomial equations field assumptions can be skirted and that the
dependence of binomial systems on their coefficients is quite
weak~\cite{kahle11mesoprimary}.  For binomial equations one can hope
for results that do not depend on the explicit values of the
parameters and are thus robust in the presence of uncertainty.

The main theme of this paper is how to detect binomiality, that is,
how to decide if a given polynomial system is equivalent to a binomial
system.  The common way to decide binomiality is to compute a
Gr\"obner basis since an ideal can be generated by binomials if and
only if any reduced Gr\"obner basis is
binomial~\cite[Corollary~1.2]{eisenbud96:_binom_ideal}.  For
polynomial systems arising in applications, however, computing a
reduced Gr\"obner basis is often too demanding: as parameter values
are unknown, computations have to be performed over the field of
rational functions in the parameters.  Even though this is
computationally feasible, it is time consuming and usually yields an
output that is hard to digest for humans.  This added complexity comes
from the fact that Gr\"obner bases contain a lot more information than
what may be needed for a specific task such as deciding binomiality of
a polynomial system.  Hence Gr\"obner-free methods are desirable.

\subsection*{Gr\"obner-free methods} 
Gr\"obner bases started as a generalization of Gauss elimination to
polynomials.  They have since come back to their roots in linear
algebra by the advent of F4 and F5 type algorithms which try to
arrange computations so that sparse linear algebra can
exploited~\cite{eder2014survey}.  Our method draws on linear algebra
in bases of monomials too, and is inspired by these developments in
computer algebra.

Deciding if a set of polynomials can be brought into binomial form
using linear algebra is the question whether the coefficient matrix
has a \emph{partitioning kernel basis} (Definition~\ref{d:pkb} and
Proposition~\ref{p:pkbvsBin}).  Deciding this property requires only
row reductions and hence is computationally cheap compared to
Gr\"obner bases.  It was shown in \cite{MillanToricSteady} that, if
the coefficient matrix of a suitably extended polynomial system admits
a partitioning kernel basis, then the polynomial system is generated
by binomials.  As a first insight we show that the converse of this
need not hold (Example~\ref{e:noConverse}).

In general computer algebra profits from homogeneity.  This is true
for Gr\"obner bases where, for example, Hilbert function driven
algorithms can be used to convert a basis for a term-order that is
cheap to compute into one for an expensive order, such as lex.  We
also observe this phenomenon in our Gr\"obner-free approach: a
satisfying answer to the binomial detection problem can be found if
the given system of polynomials is homogeneous.  In
Section~\ref{sec:homogenous-case} we discuss this case which
eventually leads to Algorithm~\ref{a:binomialDetect}.

In the inhomogeneous case things are more complicated.  Gr\"obner
basis computations can be reduced to the homogeneous case by an easy
trick.  Detection of binomiality can not
(Example~\ref{ex:homogGensNO}).  We address this problem by collecting
heuristic approaches that, in the best case, establish binomiality
without Gr\"obner bases (Recipe~\ref{recipe}).  Our approaches can
also be used if the system is not entirely binomial, but has some
binomials.  In Example~\ref{e:ERK} we demonstrate this on a polynomial
system from~\cite{fein-043}.

\subsection*{Binomial steady state ideals}

While binomiality detection can be applied to any polynomial system,
our motivation comes from chemical reaction network theory where
ordinary differential equations with polynomial right-hand sides are
used to model dynamic processes in systems biology.  The mathematics
of these systems is extremely challenging, in particular since
realistic models are huge and involve uncertain parameters.  As a
consequence of the latter, studying dynamical systems arising in
biological applications often amounts to studying parameterized
families of polynomial ODEs.  The first order of business (and concern
of a large part of the work in the area) is to determine steady states
which are thus the non-negative real zeros of families of
parameterized polynomial equations.  Moreover, the structure of the
polynomial ODEs entails the existence of affine linear subspaces that
are invariant for solutions. Hence questions concerning existence and
uniqueness of steady states or existence of multiple steady states are
equivalent to questions regarding the intersection of the zero set of
a parameterized family of polynomials with a family of affine linear
subspaces. 

If the polynomial equations describing steady states are equivalent to
binomial equations (that is, generate a binomial ideal), then their
mathematical analysis becomes much easier.  This is the main theme
of~\cite{MillanToricSteady}.  If a system is binomial, then, for
instance, one can decide efficiently if positive steady states exist.
If so, then a monomial parametrization can be found using only linear
algebra over the integers~\cite[Section~2]{eisenbud96:_binom_ideal},
and the steady states are \emph{toric}: they are the positive real
part of a toric variety.  A sufficient criterion for toric steady
states appears in \cite[Theorem~4.1]{johnston2014translated}.  Since
zero sets of general polynomial systems need not have parametrizations
at all, we view the task of detecting binomiality as an important step
in analyzing polynomials---in systems biology, or other areas like
algebraic statistics, control theory, economics, etc.

A frequent and challenging problem in the analysis of dynamical
systems in biology is to decide \emph{multistationarity}, that is, the
existence of parameter values leading to more than one isolated steady
state.  A variety of results for precluding multistationarity has
appeared in recent years.  See, for instance,
\cite{joshi2012simplifying,wiuf2013power,craciun2006multiple,banaji2010graph}
for methods employing the Jacobian.  Similarly, several sufficient
criteria for multistationarity have emerged (for
example~\cite{craciun2010multiple,fein-043}).  In general this problem
remains very hard.  However, in the case of binomial steady state
equations, the question of multistationarity can often be answered
effectively, for example positively
by~\cite[Theorem~5.5]{MillanToricSteady}, or negatively
by~\cite[Theorem~1.4]{muller2013sign}.  Both of these results require
only the study of systems of linear inequalities.

\subsection*{Notation}
In this paper we work with the polynomial ring $\kk[x_1,\dots,x_n]$ in
$n$ variables.  The coefficient field $\kk$ is usually $\mathbb{R}$,
or the field of real rational functions in a set of parameters.  Our
methods are agnostic towards the field.  A system of polynomial
equations $f_1 = f_2 = \dots = f_s = 0$ in the variables
$x_1,\dots,x_n$ is encoded in the \emph{ideal}
$\<f_1,\dots,f_s\> \subset \kk[x_1,\dots,x_n]$.  A polynomial is
\emph{homogeneous} if all its terms have the same total degree, and an
ideal is homogeneous if it can be generated by homogeneous
polynomials.  A \emph{binomial} is a polynomial with at most two
terms.  In particular, a monomial is a binomial.  It is important to
distinguish between binomial ideals and binomial systems.  A
\emph{binomial system} $f_1 = \dots = f_s = 0$ is a polynomial system
such that each $f_i$ is a binomial.  In contrast, an ideal
$\<f_1,\dots,f_s\>$ is a \emph{binomial ideal} if there exist
binomials that generate the same ideal.  Thus general non-binomials do
not form a binomial system, even if they generate a binomial ideal.
For the sake of brevity we will not give an introduction to
commutative algebra here, but refer to standard text books
like~\cite{cox96:_ideal_variet_algor,singularbook}.  The very modest
amount of matroid theory necessary in Section~\ref{sec:gb-free} can be
picked up from the first pages of~\cite{oxley2006matroid}.

\subsection*{Acknowledgment}
\label{sec:acknowledgement}

We thank David Cox for discussions on the role of homogeneity in
computer algebra.  We are grateful to Alicia Dickenstein for pointing
out a crucial error in an earlier version of this paper.  TK~is
supported by CDS the Center for Dynamical Systems at Otto-von-Guericke
University.

\section{Gröbner-free criteria for binomiality}
\label{sec:gb-free}

The most basic criterion to decide if an ideal is binomial is to
compute a Gröbner basis.  This works because the Buchberger algorithm
is binomial-friendly: an S-pair of binomials is a binomial.  Since the
reduced Gröbner basis is unique and must be computable from the
binomial generators, it consists of binomials if and only if the ideal
is binomial.  However, Gr\"obner bases can be very hard to compute, so
other criteria using only linear algebra are also desirable.  Linear
algebra enters, when we write a polynomial system as $A\Psi(x)$, the
product of a coefficient matrix $A$ with entries in $\kk$, and a
vector of monomials $\Psi(x)$.  Clearly, if we use row operations on
the matrix to bring it into a form where each row has at most two
non-zero entries, then the ideal is generated by binomials and
monomials.  This criterion is too naive to detect all binomial ideals
since it allows only $\kk$-linear combinations of the given
polynomials.  We show in Section~\ref{sec:homogenous-case} that, at
least for homogeneous ideals, it can be extended to a
characterization.  Before we embark into the details, we formalize the
condition on the matrix.

\begin{defn}\label{d:pkb}
A matrix $A$ has a \emph{partitioning kernel basis} if its kernel
admits a basis of vectors with disjoint supports, that is, if there
exists a basis $b^{(1)}, \dots, b^{(d)}$ of $\ker (A)$ such that
$\supp(b^{(i)}) \cap \supp (b^{(j)}) = \emptyset$ for all $i\neq j$.
\end{defn}

The following proposition allows one to check for a partitioning
kernel basis with linear algebra.  The underlying reason is the very
restricted structure of the kernel, expressed best in matroid
language.

\begin{prop}\label{p:pkb}
The following are equivalent for any matrix~$A$.
\begin{enumerate}[label=\arabic*),ref=\arabic*]
\item\label{it:pkb} $A$ has a partitioning kernel basis.
\item\label{it:directSum} The column matroid of $A$ is a direct sum of
uniform matroids $U_{r-1,r}$ of corank one, and possibly several
coloops~$U_{1,1}$.
\item\label{it:echelon} The reduced row echelon form of $A$ has at
most two non-zero entries in each row.
\end{enumerate}
\end{prop}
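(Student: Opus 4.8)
I plan to prove the cycle \eqref{it:echelon}$\Rightarrow$\eqref{it:pkb}$\Rightarrow$\eqref{it:directSum}$\Rightarrow$\eqref{it:echelon}. Write $M$ for the column matroid of $A$ on the index set $[n]$ of its columns, and recall the standard dictionary I will use: the support of a nonzero $v\in\ker(A)$ is dependent, hence contains a circuit of $M$, and conversely every circuit of $M$ is the support of a nonzero element of $\ker(A)$; a column $j$ is a coloop of $M$ if and only if $v_j=0$ for all $v\in\ker(A)$; and the pivot columns of the reduced row echelon form $R$ of $A$ form a basis $B$ of $M$ for which, with $\ell$ a non-pivot column and $p_i$ a pivot column, one has $R_{i\ell}\neq 0$ exactly when $p_i$ lies in the fundamental circuit of $\ell$ with respect to $B$.

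For \eqref{it:echelon}$\Rightarrow$\eqref{it:pkb} I would read the usual kernel basis off $R$: for each non-pivot column $\ell$ the vector $v^{(\ell)}$ with $v^{(\ell)}_\ell=1$, with $v^{(\ell)}_{\ell'}=0$ at the other non-pivot columns, and with $v^{(\ell)}_{p_i}=-R_{i\ell}$ lies in $\ker(A)$ and has $\supp(v^{(\ell)})=\{\ell\}\cup\{p_i:R_{i\ell}\neq 0\}$, and these vectors form a basis of $\ker(A)$. If every row of $R$ has at most two nonzero entries, then row $i$ carries its pivot entry and at most one further nonzero entry, which must lie in a non-pivot column; hence the index $p_i$ appears in at most one of the sets $\supp(v^{(\ell)})$. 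As the distinguished coordinates $\ell$ are pairwise distinct, the supports of the $v^{(\ell)}$ are pairwise disjoint, so they form a partitioning kernel basis.

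For \eqref{it:pkb}$\Rightarrow$\eqref{it:directSum}, let $b^{(1)},\dots,b^{(d)}$ be a partitioning kernel basis and put $S_i=\supp(b^{(i)})$, a family of pairwise disjoint nonempty sets. Each $S_i$ is a circuit: it contains some circuit $C$, whose associated minimal dependency is a vector $w\in\ker(A)$ with $\supp(w)=C$; expanding $w=\sum_k c_k b^{(k)}$ and using disjointness of the $S_k$ shows $\supp(w)=\bigcup_{c_k\neq 0}S_k$, and this can be contained in $S_i$ only if $w$ is a nonzero scalar multiple of $b^{(i)}$, so $C=S_i$. A column outside $\bigcup_i S_i$ is a coloop, because every element of $\ker(A)=\langle b^{(1)},\dots,b^{(d)}\rangle$ vanishes in that coordinate; hence $[n]$ is the disjoint union of the circuits $S_i$ and a set $T$ of coloops. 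A circuit of size $r$ has rank $r-1$, so $\rank(A)=n-d=\sum_i(\lvert S_i\rvert-1)+\lvert T\rvert=\sum_i\rank(S_i)+\sum_{t\in T}\rank(\{t\})$, i.e.\ the rank function of $M$ is additive over this partition; by the standard criterion this means $M$ is the direct sum of the restrictions $M|_{S_i}$ and $M|_{\{t\}}$, and $M|_{S_i}\cong U_{\lvert S_i\rvert-1,\lvert S_i\rvert}$ (corank one, being a circuit on $\lvert S_i\rvert$ elements) while $M|_{\{t\}}\cong U_{1,1}$.

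For \eqref{it:directSum}$\Rightarrow$\eqref{it:echelon}, write $M=\bigoplus_{i=1}^d U_{r_i-1,r_i}\oplus\bigoplus_{j=1}^c U_{1,1}$, with the columns partitioned into blocks $C_1,\dots,C_d$ (with $\lvert C_i\rvert=r_i$) and coloops $t_1,\dots,t_c$; the circuits of $M$ are then precisely $C_1,\dots,C_d$. The basis $B$ formed by the pivot columns of $R$ contains all the coloops and omits exactly one element $\ell_i$ from each $C_i$, so the non-pivot columns of $R$ are $\ell_1,\dots,\ell_d$ and the fundamental circuit of $\ell_i$ with respect to $B$ is $C_i$. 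Thus the $\ell_i$-th column of $R$ is nonzero only in the rows indexed by $C_i\setminus\{\ell_i\}$, and the row of $R$ indexed by a pivot $p$ carries its pivot entry together with exactly one more nonzero entry when $p$ lies in a (then unique) block $C_i$, and no further nonzero entry when $p$ is a coloop; either way at most two nonzero entries. The step I expect to need the most care is importing the two classical matroid facts used above (see \cite{oxley2006matroid})---that additivity of the rank function over a partition is equivalent to a direct-sum decomposition, and that the nonzero pattern of a reduced row echelon form is governed by fundamental circuits---together with the harmless bookkeeping when some $S_i$ is a singleton, i.e.\ a zero column of $A$, which corresponds to the loop $U_{0,1}$, still a corank-one uniform matroid on one element.
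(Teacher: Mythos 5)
Your proof is correct and establishes the same three implications as the paper, using the same dictionary between supports of kernel vectors and circuits of the column matroid (and the same standard kernel basis read off the reduced row echelon form). The only differences are cosmetic: you justify the direct-sum conclusion in (1)$\Rightarrow$(2) via rank additivity and the nonzero pattern of the echelon form in (2)$\Rightarrow$(3) via fundamental circuits, where the paper asserts these steps more tersely via the circuit axioms and the block structure of the echelon form.
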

\begin{proof}
\ref{it:pkb} $\Rightarrow$ \ref{it:directSum}: Let $b_1,\dots,b_k$ be
the partitioning kernel basis.  The supports of this basis satisfy the
circuit axioms and are thus equal to the circuits of the column
matroid of~$A$.  Indeed, non-containment and circuit elimination are
satisfied trivially because there is no overlap between any two
circuits.  For any non-zero element $\tilde{b} \in \ker (A)$, we have
$\tilde{b} = \sum_i \lambda_i b_i$.  By the partitioning kernel basis
property
$\supp(\tilde{b}) = \bigcup_i \{\supp (b_i) : \lambda_i \neq 0\}$, so
either $\tilde {b}$ is proportional to one of the $b_i$, or its
support properly contains the support of a circuit, so it cannot be a
circuit.  The columns of $A$ which do not appear in any circuit are
coloops and the remaining columns form a direct sum of $k$ uniform
matroids of corank one.

\ref{it:directSum} $\Rightarrow$ \ref{it:echelon}: If the column
matroid of $A$ is a direct sum of matroids, then the unique reduced
row echelon form has block structure corresponding to the direct sum
decomposition.  Therefore it suffices to consider a single block which
has one-dimensional kernel of full support (the coloops are
$(1\times 1)$-identity blocks).  Ignoring zero rows, the reduced row
echelon form of such a matrix is $(\Id_{r-1} | c)$ where $r-1$ is the
rank and $c \in\kk_{\neq 0}^{r-1}$.

\ref{it:echelon} $\Rightarrow$ \ref{it:pkb}: Rows of the reduced row
echelon form with exactly one non-zero entry correspond to positions
where every element of the kernel has a zero.  Thus we can assume that
there are none and each row of $A$ has exactly two non-zero entries.
Let $c$ be a non-pivotal column with $r-1$ non-zero entries.  The
restriction of $A$ to $c$ and the corresponding pivotal columns yields
a block containing $(I_{r-1}|c)$ and some zero rows.  The unique
kernel vector corresponding to the dependencies in this block is
orthogonal to the kernel of the remaining columns.  This procedure
can be applied to any non-pivotal column.  The thus constructed basis
is a partitioning kernel basis.
\end{proof}

\begin{remark}\label{r:pkbComplexity}
Proposition~\ref{p:pkb} shows that the complexity of deciding if a
matrix has a partitioning kernel basis is essentially the same as that
of Gauss-Jordan elimination.  One needs $O(n^3)$ field operations
where $n$ is the larger of the dimensions of the matrix.
\end{remark}

\begin{remark}\label{r:partitionMatroid}
A direct sum of (arbitrary) uniform matroids is called a partition
matroid.
\end{remark}

We now translate Proposition~\ref{p:pkb} to polynomial systems.

\begin{prop}\label{p:pkbvsBin}
If $A$ has a partitioning kernel basis, and $\Psi(x)$ is a vector of
monomials of appropriate length, then the ideal
$\<A\Psi(x)\> \subset \kk[x_1,\dots,x_n]$ is binomial.  If $A\Psi(x)$
is any system that can be transformed into a binomial system using
only $\kk$-linear combinations, then $A$ has a partitioning kernel
basis.
\end{prop}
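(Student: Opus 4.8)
The plan is to prove the two implications separately, in each case translating between the linear‐algebraic condition on $A$ and the shape of the generators of $\<A\Psi(x)\>$, and then invoking Proposition~\ref{p:pkb} to replace ``partitioning kernel basis'' with the concrete statement about the reduced row echelon form.

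For the first implication, suppose $A$ has a partitioning kernel basis. By Proposition~\ref{p:pkb} the reduced row echelon form $R$ of $A$ has at most two non-zero entries in each row. Since $A\Psi(x)$ and $R\Psi(x)$ generate the same ideal --- the rows of $R$ are $\kk$-linear combinations of the rows of $A$ and vice versa, so $\<A\Psi(x)\> = \<R\Psi(x)\>$ --- and each row of $R$, being a vector with at most two non-zero entries paired against the monomial vector $\Psi(x)$, gives a polynomial with at most two terms, the ideal $\<R\Psi(x)\>$ is generated by binomials. Hence $\<A\Psi(x)\>$ is binomial. This direction is essentially bookkeeping.

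For the converse, suppose $A\Psi(x)$ can be transformed into a binomial system using only $\kk$-linear combinations of the rows. A $\kk$-linear combination of the rows of $A$ is a row of $CA$ for an invertible matrix $C$ (we may discard rows that become zero, which only shrinks the row space; but to detect the kernel it is cleanest to take $C$ invertible and allow zero rows), and $CA$ has the same kernel as $A$. The hypothesis says each row of $CA$, paired with $\Psi(x)$, is a binomial; since the entries of $\Psi(x)$ are distinct monomials, this forces each row of $CA$ to have at most two non-zero entries. Now pass to the reduced row echelon form: the reduced row echelon form of $CA$ equals that of $A$ (row equivalence is preserved), and it is obtained from $CA$ by further row operations. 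The one subtlety is that row-reducing a matrix whose rows have at most two non-zero entries could \emph{a priori} create rows with more non-zero entries. I expect this not to happen here, and this is the main point to nail down: I would argue that a matrix all of whose rows have support size at most two has a column matroid that is a disjoint union of uniform matroids $U_{r-1,r}$ (the ``two-term'' rows encode the circuits, as in the proof of Proposition~\ref{p:pkb}) plus coloops, and then Proposition~\ref{p:pkb}\,\ref{it:directSum}$\Rightarrow$\ref{it:echelon} gives that the reduced row echelon form also has rows of support at most two --- equivalently, $A$ has a partitioning kernel basis by \ref{it:echelon}$\Rightarrow$\ref{it:pkb}.

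The main obstacle, then, is the converse direction, and specifically the claim that having all rows of small support is a property inherited by the reduced row echelon form. The clean way around it is to avoid talking about row reduction directly and instead observe that ``each row of $CA$ has at most two non-zero entries'' already pins down the column matroid of $A$: each such row is a vector in $\ker$ of the transpose picture, but more to the point, the row space of $CA$ (which equals the row space of $A$) is spanned by vectors of support at most two, and dually this is exactly the condition that circuits of the column matroid of $A$ --- supports of minimal kernel vectors --- together with the coloops account for all columns, forcing the partition-matroid structure of Proposition~\ref{p:pkb}\,\ref{it:directSum}. Once the column matroid of $A$ is identified as a direct sum of corank-one uniform matroids and coloops, Proposition~\ref{p:pkb} hands back a partitioning kernel basis, completing the proof.
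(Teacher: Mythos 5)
Your proof is correct, and its skeleton matches the paper's: the forward implication comes down to the fact that the reduced row echelon form $R$ has rows of support at most two, so $\<A\Psi(x)\>=\<R\Psi(x)\>$ is visibly generated by binomials and monomials; the converse rests on the observation that $\kk$-linear combinations are row operations, which preserve the kernel, while having a partitioning kernel basis is a property of the kernel alone. The differences are in what gets justified where. For the forward direction the paper simply cites \cite[Theorem~3.3]{MillanToricSteady} (plus the remark that coloops give monomials); your derivation from Proposition~\ref{p:pkb}\,\ref{it:echelon}) is self-contained and equally valid. For the converse the paper's proof is a one-liner, and you have correctly isolated the step it leaves implicit: one must know that a matrix all of whose rows have support at most two itself has a partitioning kernel basis (equivalently, that the support condition survives passage to the reduced row echelon form). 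Your worry is legitimate but resolves affirmatively, essentially along the lines you sketch: the support-two rows define a graph on the columns; every row of $CA$ lives inside a single connected component, so the kernel splits as a direct sum over components; and a component on $k$ columns contains a spanning tree of $k-1$ linearly independent rows, so each summand of the kernel is at most one-dimensional. Picking a spanning vector per component gives a basis with disjoint supports, which is exactly the partition-matroid structure of Proposition~\ref{p:pkb}\,\ref{it:directSum}). Writing that sentence out would close the only soft spot in your argument: as it stands, your final paragraph asserts the matroid identification (``dually this is exactly the condition\dots'') rather than proving it, but the claim is true and your route is the right one.
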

\begin{proof}
Up to coloops the first part is \cite[Theorem~3.3]{MillanToricSteady}
and the coloops only give monomials.  The second statement is clear
since $\kk$-linear combinations of polynomials are row operations on
the coefficient matrix and those do not change the kernel.
\end{proof}

Our general strategy is to suitably extend a given system $A\Psi(x)$
with redundant polynomials such that Proposition~\ref{p:pkbvsBin}
yields binomiality of an extended system $A'\Psi'(x)$.  This happens
in the following example.

\begin{example}\label{e:notNecessary}
Let $f_1 = x - y$, $f_2 = z - w$, and
$f_3 = x (f_1 + f_2) = x^2 - xy + xz - xw$.  Ordering the monomials
$\Psi^T = (x,y,z,w,x^2,xy,xz,xw)$, the system linearizes as
\[
f = (f_1,f_2,f_3) =
\begin{pmatrix}
1 & -1 & 0 & 0 & 0  & 0  &  0  &  0     \\
0 & 0 & 1 & -1 & 0  & 0  &  0  &  0     \\
0 & 0 & 0 & 0 &  1  & -1 &  1  &  -1 
\end{pmatrix}
\cdot \Psi.
\]
The coefficient matrix is in reduced row echelon form and does not
have a partitioning kernel basis by Proposition~\ref{p:pkb}.
Algorithm~\ref{a:binomialDetect} takes this problem into account,
working degree by degree.  The ideal is binomial since $f_3$ is a
binomial (in fact, zero) in the quotient ring
$\kk[x,y,z]/\<x-y,z-w\>$.
\end{example}

The first hint into how to extend $A$ and $\Psi(x)$ is the following
theorem due to P\'erez Mill\'an et al.

\begin{thm}[{\cite[Theorem~3.19]{MillanToricSteady}}]
\label{t:sufficient}
Let $f_1 = f_2 = \cdots = f_s = 0$ be a polynomial system.  If there
exist monomials $x^{\alpha_1}, \dots, x^{\alpha_m}$ such that, for
some $i_1,\dots,i_m \in [s]$, the system
\begin{equation} \label{eq:extramons}
f_1 = f_2 = \cdots = f_s = x^{\alpha_1}f_{i_1} = \cdots =
x^{\alpha_m}f_{i_{m}} = 0
\end{equation}
has a coefficient matrix with a partitioning kernel basis, then
$\<f_1,\dots,f_s\>$ is binomial.
\end{thm}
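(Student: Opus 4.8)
The plan is to reduce everything to Proposition~\ref{p:pkbvsBin} by a clean change of perspective: the system in~\eqref{eq:extramons} is nothing but $A'\Psi'(x)$ for an explicitly chosen coefficient matrix $A'$ and monomial vector $\Psi'(x)$. First I would let $\Psi(x)$ be the monomial vector underlying the original system, so that $f_j = (A\Psi(x))_j$ for a coefficient matrix $A$, and then enlarge $\Psi(x)$ to $\Psi'(x)$ by adjoining all monomials occurring in the products $x^{\alpha_k} f_{i_k}$; these products are themselves polynomials, hence $\kk$-linear combinations of monomials, so each $x^{\alpha_k} f_{i_k} = (A'\Psi'(x))_{s+k}$ for a suitable new row. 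Stacking $A$ (padded with zero columns for the new monomials) on top of these $m$ new rows gives a matrix $A'$ with $A'\Psi'(x)$ equal to the left-hand side of~\eqref{eq:extramons}.

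With this set-up the hypothesis says precisely that $A'$ has a partitioning kernel basis, so Proposition~\ref{p:pkbvsBin} applies directly and yields that the ideal $\<A'\Psi'(x)\> = \<f_1,\dots,f_s, x^{\alpha_1}f_{i_1},\dots,x^{\alpha_m}f_{i_m}\>$ is binomial. The final step is to observe that this ideal equals $\<f_1,\dots,f_s\>$: each extra generator $x^{\alpha_k} f_{i_k}$ lies in $\<f_1,\dots,f_s\>$ already, being a polynomial multiple of $f_{i_k}$, so adjoining them does not change the ideal. Hence $\<f_1,\dots,f_s\>$ is binomial, as claimed.

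I do not expect a serious obstacle here; the content of the theorem is really the combination of Proposition~\ref{p:pkbvsBin} with the trivial remark that multiplying generators by monomials does not enlarge the ideal. The one point demanding a little care is the bookkeeping with $\Psi'(x)$: one must make sure that $\Psi'(x)$ is a genuine vector of \emph{distinct} monomials of the appropriate length so that Proposition~\ref{p:pkbvsBin} is applicable verbatim, and that the rows coming from $A$ are correctly extended by zeros in the new coordinates so that $A'\Psi'(x)$ reproduces the old system unchanged in its first $s$ entries. Once that is in place, the proof is immediate.
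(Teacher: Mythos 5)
Your proposal is correct and follows essentially the same route as the paper: the paper likewise observes that the added generators $x^{\alpha_k}f_{i_k}$ do not change the ideal, and then invokes the partitioning-kernel-basis criterion (Proposition~\ref{p:pkbvsBin}, i.e.\ Theorem~3.3 of \cite{MillanToricSteady}) on the extended system. Your extra bookkeeping with $A'$ and $\Psi'(x)$ just makes explicit what the paper leaves implicit.
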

Theorem~\ref{t:sufficient} is true since the additional generators
in~\eqref{eq:extramons} do not change the ideal that the system
generates.  This together with the explicit description of the
binomial generators in the case of a partitioning kernel
basis~\cite[Theorem~3.3]{MillanToricSteady} yields the result.  If the
condition in Theorem~\ref{t:sufficient} were also necessary, then a
test for binomiality could be built on trying to systematically
identify the monomials~$x^\alpha$.  However, the converse of
Theorem~\ref{t:sufficient} is not true.
\begin{example}\label{e:noConverse}
Let $I = \<f_1,f_2\>$ be the homogeneous binomial ideal generated by
the non-binomials $f_1 = x-y+x^2+y^2+z^2$, $f_2 = x^2+y^2+z^2$.  For
no choice of monomials $m_{11}, \dots, m_{1r}$,
$m_{21}, \dots, m_{2s} \in \kk[x,y,z]$ does the
coefficient matrix of the system
\begin{equation}\label{eq:f1f2system}
f_1 = m_{11}f_{1} = \cdots = m_{1r} f_{1} =  
f_2 = m_{21}f_{2} = \cdots = m_{2s} f_{2} =  0
\end{equation}
have a partitioning kernel basis.
\end{example}
For the proof Example~\ref{e:noConverse} we first need the following
curious little fact.
\begin{lemma}\label{l:noBinomial}
The ideal $I = \<x^2+y^2+z^2\> \subset \kk[x,y,z]$ does not contain a
non-zero binomial.
\end{lemma}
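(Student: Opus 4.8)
The plan is to show directly that no non-zero element $g \in I$ can have only two terms. Since $I$ is principal and generated by the irreducible quadric $q = x^2+y^2+z^2$ (irreducible because it has rank $3$ over any field in which we work, assuming $\ch \kk \neq 2$), every element of $I$ is of the form $g = h q$ for some $h \in \kk[x,y,z]$. So I must rule out the possibility that $hq$ is a binomial for some non-zero $h$. I will extract a contradiction from the homogeneous structure: $q$ is homogeneous of degree $2$, so if $h = h_0 + h_1 + \cdots + h_t$ is the decomposition of $h$ into homogeneous parts with $h_0, h_t \neq 0$, then the lowest-degree part of $hq$ is $h_0 q$ (degree $\deg h_0 + 2$) and the top part is $h_t q$ (degree $\deg h_t + 2$). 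For $hq$ to have at most two terms, each homogeneous component must contribute, so the cleanest route is to first reduce to the case that $h$ itself is homogeneous: if $hq$ has two terms, those two terms lie in at most two distinct degrees, forcing $h$ to have at most two homogeneous parts, and a short case analysis (or the observation that $h_0 q$ and $h_t q$ are each individually divisible by $q$) lets me reduce to $h$ homogeneous, hence $hq$ homogeneous with exactly two terms in a single degree $d = \deg h + 2$.

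So it remains to show: if $h$ is homogeneous and $hq = x^a y^b z^c \pm x^{a'} y^{b'} z^{c'}$ (two monomials of the same degree $d$), we get a contradiction. The key step is to evaluate on the complex variety $V(q) \subset \mathbb{P}^2$, or more concretely to exploit that $q$ vanishes on a $2$-dimensional cone while a difference of two monomials vanishes only on a union of coordinate hyperplanes together with a binomial hypersurface. Precisely: $hq$ vanishes identically on the quadric cone $\{x^2+y^2+z^2 = 0\}$, which over $\overline{\kk}$ contains points with all coordinates non-zero (e.g. $(1, i, 0)$ has a zero coordinate, but $(1,1,i\sqrt{2})$ does not — pick such a point). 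A two-term polynomial $x^{\alpha} \pm x^{\beta}$ with $\alpha \neq \beta$ vanishes at a point with all coordinates non-zero only if the corresponding monomial ratio equals $\mp 1$ there, which cuts out a proper subvariety; it cannot vanish on all of the punctured quadric cone. Alternatively, and perhaps more robustly against field issues, I would compare factorizations: $\kk[x,y,z]$ is a UFD, $q$ is irreducible, and a binomial $m_1 \pm m_2$ factors as $\gcd(m_1,m_2) \cdot (\text{binomial with coprime monomials})$; the coprime binomial factor $u \pm v$ with $\gcd(u,v)=1$ is squarefree and each of its irreducible factors involves at most... — here one checks that an irreducible factor of $u - v$ with $u,v$ coprime monomials cannot equal $x^2+y^2+z^2$, because modulo $x$ the polynomial $u-v$ becomes a binomial in $y,z$ (or a monomial, or $\pm 1$), whereas $q \bmod x = y^2 + z^2$ would have to divide it, and $y^2+z^2$ is not a monomial — pushing this through the coprimality of $u$ and $v$ yields the contradiction.

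The main obstacle I anticipate is handling characteristic $2$ and handling the reduction from general $h$ to homogeneous $h$ cleanly. In characteristic $2$, $x^2+y^2+z^2 = (x+y+z)^2$ is not irreducible and in fact $I$ does contain squares of linear forms, but no binomial still — so the argument must either explicitly assume $\ch \kk \neq 2$ (which is fine here since $\kk = \mathbb{R}$ or a function field over $\mathbb{R}$) or treat that case separately using that $x+y+z$ divides no binomial with coprime support. The homogeneity reduction is the other delicate point: one must argue that the two terms of $hq$ cannot "straddle" three or more degrees, and that if they sit in two degrees $d_1 < d_2$ then $hq$ splits as $h'q + h''q$ with $h'q, h''q$ each a single monomial, reducing to the homogeneous monomial case — i.e. $q$ divides a single monomial, which is absurd since a monomial's only irreducible factors are the variables. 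I expect the whole argument to be short once these two points are nailed down.
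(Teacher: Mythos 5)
Your overall plan is reasonable, and the reduction to the homogeneous case (a non-homogeneous two-term multiple of $q=x^2+y^2+z^2$ would force $q$ to divide a single monomial, which is absurd) is correct and matches what the paper does implicitly. But both of the arguments you offer for the homogeneous core have a gap at exactly the step that carries the content of the lemma. In the evaluation route, the phrase ``cuts out a proper subvariety; it cannot vanish on all of the punctured quadric cone'' is precisely what needs proof: since $V(q)$ is irreducible and meets the torus densely, the assertion that $m_1-\lambda m_2$ does not vanish on all of $V(q)$ minus the coordinate hyperplanes is \emph{equivalent} to $q\nmid m_1-\lambda m_2$, so as stated the step assumes the conclusion. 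A single point with all coordinates nonzero can never finish the job (the binomial may well vanish there for the right $\lambda$), and showing the monomial ratio is non-constant on the punctured cone is field-sensitive. The paper's trick goes the opposite way from you: after dividing out common variables the two monomials have disjoint supports, hence one of them is a pure power $x^d$ of a single variable, and one evaluates at a point of the cone with a \emph{zero} coordinate, namely $(\xi,1,0)$ with $\xi^2=-1$, which kills the other monomial and leaves $\xi^d\neq 0$. You explicitly steer away from such points, which is why your non-vanishing claim has nothing concrete to grab onto.

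Your UFD route is a genuinely different and salvageable argument, but the case analysis as written fails in one branch. If neither $u$ nor $v$ is divisible by $x$, then $(u-v)\bmod x=u-v$ is a binomial in $y,z$, and it is simply false that $y^2+z^2$ cannot divide such a binomial: $(y^2+z^2)(y^2-z^2)=y^4-z^4$. So the contradiction you invoke (``$y^2+z^2$ is not a monomial'') does not cover this branch. The branch is nonetheless impossible, but for a different reason: $q$ is monic of degree $2$ in $x$, so every nonzero multiple of $q$ has $x$-degree at least $2$, while $u-v\in\kk[y,z]$ has $x$-degree $0$. Equivalently, you must reduce modulo a variable that divides exactly one of $u$ and $v$ (coprimality guarantees such a variable exists unless both are constants); reducing modulo $x$ specifically is not always the right choice. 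With that repair your second route becomes a complete proof, comparable in length to the paper's; note also that the paper's evaluation argument works verbatim in characteristic $2$ (take $\xi=1$), so the separate treatment you anticipate there is unnecessary.
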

\begin{proof}
We can assume that $\kk$ is algebraically closed, since if $I$
contains a non-zero binomial, then so does its extension to the
algebraic closure.  Assume that for some $f\in\kk[x,y,z]$, the product
$b = f(x^2+y^2+z^2)\in I$ is a binomial.  We can assume that $b$ is
not divisible by any variable.  Indeed, if a variable divides~$b$,
then it divides $f$ and we find a lower degree binomial in~$I$.  Since
$I$ is homogeneous, we can also assume that $b$ is homogeneous.
Potentially renaming the variables, we can assume
$b = x^d - \lambda y^sz^{d-s}$ for some $0\le s < d$ and
$\lambda\in\kk$.  Since $\kk$ is algebraically closed, there is a
solution $\xi$ to the equation $x^2 = -1$.  The generator
$x^2+y^2+z^2$ vanishes at $(\xi,1,0)$ but $b$ does not vanish there.
This contradiction shows that $I$ cannot contain a binomial.
\end{proof}
\begin{proof}[Proof of Example~\ref{e:noConverse}]
Let $d$ be the highest total degree among monomials in the
system~\eqref{eq:f1f2system} and consider the restriction of all
involved polynomials to degree~$d$.  Since the highest degree part of
both $f_1$ and $f_2$ equals $(x^2 + y^2 + z^2)$, only monomial
multiples of $(x^2+y^2+z^2)$ can contribute to the degree $d$ part of
the system~\eqref{eq:f1f2system}.  If the whole coefficient matrix
of~\eqref{eq:f1f2system} had a partitioning kernel basis, then also
the submatrix with only the columns for degree $d$ monomials had one
(Proposition~\ref{p:pkb}).  In this case row reductions on the
submatrix would yield a binomial in degree~$d$ in the ideal generated
by $(x^2 + y^2 + z^2)$. This is impossible by
Lemma~\ref{l:noBinomial}.
\end{proof}

Example~\ref{e:noConverse} may seem contrived, but this kind of
``trivial obfuscation'' of binomials does happen in applications.  Of
course, for humans it is obvious that one should first isolate the
linear binomial $x-y$ and then search for implied quadratic binomials
which reduce the trinomial.  Our next aim is
Algorithm~\ref{a:binomialDetect} which implements this idea, at least
in the homogeneous case.  The homogeneity assumption cannot be
skirted, unfortunately.  It is true that an ideal is binomial if and
only if its homogenization is
binomial~\cite[Corollary~1.4]{eisenbud96:_binom_ideal}, but the
homogenization is not accessible without a Gröbner basis.  It would be
superb for our purposes if homogenizing the generators of a binomial
ideal would always yield a binomial ideal.  Unfortunately this is not
the case as Example~\ref{ex:homogGensNO} shows.

\section{The homogeneous case}
\label{sec:homogenous-case}

If a given ideal $I$ is homogeneous, the graded vector space structure
of the quotient $\kk[x_1,\dots,x_n]/I$ allows one to check binomiality
degree by degree.  For this we need some basic facts about quotients
modulo binomials (see \cite[Section~1]{eisenbud96:_binom_ideal} for
details).  Any set of binomials $B$ in $\kk[x_1,\dots,x_n]$ induces an
equivalence relation on the set of monomials in $\kk[x_1,\dots,x_n]$
under which $m_1 \sim m_2$ if and only if
$m_1 - \lambda m_2 \in \<B\>$ for some non-zero $\lambda \in \kk$.  As
a $\kk$-vector space the quotient ring $\kk[x_1,\dots,x_n]/\<B\>$ is
spanned by the equivalence classes of monomials and those are all
linearly independent~\cite[Proposition~1.11]{eisenbud96:_binom_ideal}.
If the binomials are homogeneous, then the situation is particularly
nice.  For example, the equivalence classes are finite and elements of
the quotient have well-defined degrees.  The notions of monomial,
binomial, and polynomial are extended to the quotient ring.  For
example, a binomial is a polynomial that uses at most two equivalence
classes of monomials.  The unified mathematical framework to treat
quotients modulo binomials are monoid algebras, but we refrain from
introducing this notion here.

As a consequence of the discussion above, a polynomial system
$f_1 = \dots = f_s = 0$ can be considered modulo binomials, and the
coefficient matrix of the quotient system is well-defined.  It arises
from the coefficient matrix of the original system by summing columns
for monomials in the same equivalence class.

\begin{example}\label{e:modBinom}
In $\kk[x,y]$, let $b = x^2-y^2$.  Among monomials of total degree
three, $x^3$ and $xy^2$, as well as $x^2y$ and $y^3$ become equal in
$\kk[x,y]/\<b\>$.  Thus the degree three part in the quotient is
two-dimensional with one basis vector per equivalence class.
Consequently, the trinomial $f=x^3+xy^2+y^3$ maps to a binomial with
coefficient matrix $[2,1]$.  This matrix arises from the matrix
$[1,1,1,0]$ by summing the columns corresponding to $x^3$ and~$xy^2$,
as well as those for $x^2y$ and~$y^3$.
\end{example}

The reduction modulo lower degree binomials in
Example~\ref{e:modBinom} can be done in general.  

\begin{lemma}\label{l:onedegree}
Let $f_1,\dots,f_s \in \kk[x_1,\dots,x_n]$ be homogeneous polynomials
of degree~$d$, and $B \subset \kk[x_1,\dots,x_n]$ a set of homogeneous
binomials of degree at most~$d$.  Then in the quotient ring
$\kk[x_1,\dots,x_n]/\<B\>$ the ideal $\<f_1,\dots,f_s\>/\<B\>$ is
binomial if and only if the coefficient matrix of the images of
$f_1,\dots,f_s$ in $\kk[x_1,\dots,x_n]/\<B\>$ has a partitioning
kernel basis.
\end{lemma}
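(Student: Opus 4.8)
The plan is to reduce the assertion to a statement in linear algebra about row spaces spanned by binomial vectors, and then to deduce that statement from the structure of the kernel already laid bare in Proposition~\ref{p:pkb}.

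Write $S=\kk[x_1,\dots,x_n]/\<B\>$. Since $B$ is homogeneous, $S$ is graded, and since the $f_i$ are homogeneous of degree $d$, the image ideal $J=\<\bar f_1,\dots,\bar f_s\>\subseteq S$ is generated in degree $d$; thus $J=S\cdot J_d$, where $J_d=\mathrm{span}_\kk(\bar f_1,\dots,\bar f_s)$ is its degree-$d$ piece. First I would show that $J$ is binomial if and only if $J_d$ is spanned, as a $\kk$-vector space, by binomials of $S$. The direction ``$\Leftarrow$'' is immediate, since then $J=S\cdot J_d$ is generated by those binomials. For ``$\Rightarrow$'', pick homogeneous binomial generators of $J$ --- the homogeneous components of any binomial generators are monomials or binomials and still generate because $J$ is homogeneous --- and note that $J_d$ is then spanned by monomial-class multiples of these generators, each of which is the image in $S$ of a monomial multiple of a binomial of $\kk[x_1,\dots,x_n]$, hence a binomial of $S$. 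Writing $A$ for the coefficient matrix of $\bar f_1,\dots,\bar f_s$ with respect to the basis of degree-$d$ monomial classes that occur in them, we have $J_d=\mathrm{rowspace}(A)$, and a binomial of $S$ lying in $J_d$ is exactly a vector of $\mathrm{rowspace}(A)$ with support at most two. So the lemma reduces to: \emph{$\mathrm{rowspace}(A)$ is spanned by vectors of support $\le 2$ if and only if $A$ has a partitioning kernel basis.}

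The direction ``$\Leftarrow$'' of this reduction follows at once from Proposition~\ref{p:pkb}: a partitioning kernel basis forces the reduced row echelon form of $A$ to have at most two nonzero entries per row, and its rows span $\mathrm{rowspace}(A)$. For ``$\Rightarrow$'', let $v_1,\dots,v_m$ be support-$\le 2$ vectors spanning $V:=\mathrm{rowspace}(A)$ and form the graph $G$ on the column set with an edge $\{i,j\}$ for each $v_k$ of support $\{i,j\}$ and a marked vertex for each $v_k$ supported on a single index. A column index lying in no $\supp(v_k)$ gives a zero column of $A$; these contribute disjoint singletons to a kernel basis and may be set aside. Each $v_k$ is supported in a single connected component $P_1,\dots,P_\ell$ of $G$, so $V=\bigoplus_t V_t$ with $V_t=\mathrm{span}_\kk\{v_k:\supp(v_k)\subseteq P_t\}\subseteq\kk^{P_t}$, and $\ker A=\bigoplus_t\ker(A|_{P_t})$ accordingly. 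Within one component the conditions $x\perp v_k$ either pin a coordinate to zero or force one coordinate to be a nonzero multiple of an adjacent one; since $G|_{P_t}$ is connected, a spanning-tree argument shows $\ker(A|_{P_t})$ is either $0$ or one-dimensional and spanned by a vector of full support. Hence the column matroid of $A$ restricted to $P_t$ is free or equal to $U_{|P_t|-1,|P_t|}$, so the column matroid of $A$ is a direct sum of coloops and corank-one uniform matroids, and Proposition~\ref{p:pkb} yields the partitioning kernel basis.

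I expect the main obstacle to be precisely the ``$\Rightarrow$'' half of the linear-algebra reduction --- that a row space spanned by binomial vectors must already admit a partitioning kernel basis --- with the heart of the matter being the spanning-tree (signed-graph propagation) argument that bounds each connected block's contribution to the kernel by a single full-support vector. Everything surrounding it (passing between $J$ and $J_d$, and between binomials of $S$ and support-$\le 2$ vectors) is routine bookkeeping about quotients modulo homogeneous binomials, and the only subtlety there is the degenerate possibility that summing columns makes a coefficient vanish, which merely adds harmless disjoint singleton circuits.
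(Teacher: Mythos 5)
Your proof is correct, and at the top level it follows the same strategy as the paper: reduce the question to the degree-$d$ graded piece and then characterize binomiality of that finite-dimensional space via the partitioning-kernel-basis condition. The difference is that you open up both of the paper's black boxes. Where the paper invokes the graded Nakayama lemma to conclude that any minimal generating set of $\<f_1,\dots,f_s\>/\<B\>$ lives in degree~$d$, you argue directly from $J=S\cdot J_d$ and from taking homogeneous components of arbitrary binomial generators (using that $J$ vanishes in degrees below~$d$); this is more elementary and avoids Nakayama entirely. And where the paper simply cites the second half of Proposition~\ref{p:pkbvsBin} --- whose own proof in the paper is the rather terse remark that $\kk$-linear combinations are row operations and do not change the kernel --- you actually prove the needed equivalence, namely that a row space spanned by support-$\le 2$ vectors forces a partitioning kernel basis, via the connected-components/spanning-tree propagation argument. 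That argument is sound: the decomposition $V=\bigoplus_t V_t$ dualizes to a decomposition of the kernel, and within a component the edge relations pin every coordinate to a nonzero multiple of a root coordinate, so each block contributes either nothing or a single full-support circuit, matching condition~\ref{it:directSum} of Proposition~\ref{p:pkb}. The net effect is a self-contained proof that makes explicit exactly the linear-algebra fact the paper's two-sentence proof delegates to earlier results; the paper's version is shorter but leans on Proposition~\ref{p:pkbvsBin} whose converse direction is itself only sketched. One tiny point of bookkeeping: the zero columns you set aside are loops $U_{0,1}$ rather than coloops, but these are corank-one uniform matroids and hence still covered by condition~\ref{it:directSum} of Proposition~\ref{p:pkb}, so nothing is lost.
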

\begin{proof}
The graded version of Nakayama's lemma (see Corollary~4.8b together
with Exercise~4.6 in~\cite{eisenbud95:_commut_algeb}) implies that the
ideal $\<f_1,\dots,f_s\>/\<B\> \subset \kk[x_1\dots,x_n]/\<B\>$ has a
well-defined number of minimal generators in each degree.  Therefore
any minimal generating set consists only of degree $d$ polynomials and
Proposition~\ref{p:pkbvsBin} applied to the finite-dimensional vector
space of degree $d$ polynomials correctly decides binomiality.
\end{proof}

Lemma~\ref{l:onedegree} is the basis for the following binomial
detection algorithm.

\begin{alg}\label{a:binomialDetect} $ $\\
\textbf{Input:} Homogeneous polynomials $f_1,\dots, f_s \in \kk[x_1,\dots,x_n]$.\\
\textbf{Output:} \emph{Yes} and a binomial generating set of
$\<f_1,\dots,f_s\>$ if one exists, \emph{No} otherwise.

\begin{enumerate}[label=\arabic*),ref=\arabic*]
\item Let 
\begin{itemize}
\item $B := \emptyset$,
\item $R := \kk[x_1,\dots,x_n]$,
\item $F := \{f_1,\dots,f_s\}$.
\end{itemize}
\item \label{it:sublist} While $F$ is not empty,
\begin{enumerate}[label=\alph*),ref=\alph*]
\item Let $F_{\text{min}}$ be the set of elements of minimal degree
in~$F$.
\item Redefine $F := F \setminus F_{\text{min}}$.
\item \label{it:computeEchelon} Compute the reduced row echelon form
$A$ of the coefficient matrix of~$F_{\text{min}}$.
\item \label{it:NO} If $A$ has a row with three or more non-zero
entries, output \emph{No} and stop.
\item \label{it:findBinomials} Find a set $B'$ of binomials in
$\kk[x_1,\dots,x_n]$ whose images in $R$ generate $\<F_{\text{min}}\>$
and redefine $B := B \cup B'$.
\item \label{it:quotient} Redefine $R := \kk[x_1,\dots,x_n] / \<B\>$.
\item \label{it:redefine} Redefine $F$ as its image in~$R$.
\end{enumerate}
\item Output \emph{Yes} and~$B$.
\end{enumerate}
\end{alg}
\begin{proof}[Proof of correctness and termination]
Termination is obvious.  In fact, the maximum number of iterations in
the while loop equals the number of distinct total degrees among
$f_1,\dots,f_s$.  Step~\ref{it:sublist}.\ref{it:NO} relies on
Proposition~\ref{p:pkb}.  In
step~\ref{it:sublist}.\ref{it:findBinomials}, binomials that generate
$\<F_{\text{min}}\>$ in~$R$ can be read off the reduced row echelon
form via Proposition~\ref{p:pkb}.  Then any preimages in
$\kk[x_1,\dots,x_n]$ of those binomials suffice for~$B'$.
Lemma~\ref{l:onedegree} shows that the while loop either exhausts $F$
if $\<f_1,\dots,f_s\>$ is binomial, or stops when this is not the
case.
\end{proof}

\begin{remark}
In the homogeneous case there is a natural choice of
finite-dimensional vector spaces to work in: polynomials of a fixed
degree.  In each iteration of the while loop in
Algorithm~\ref{a:binomialDetect}, the rows of $A$ span the vector
space of polynomials of degree $d$ in the ideal (modulo the binomials
in~$\<B\>$).  In the general inhomogeneous situation extra work is
needed to construct a suitable finite-dimensional vector space.  In
particular, one needs to select from the infinite list of binomials in
the ideal not too many, but enough to reduce all given polynomials to
binomials whenever this is possible.  An interesting problem for the
future is to adapt one of the selection strategies from the F4
algorithm for Gr\"obner bases~\cite{faugere1999new} for this task.
\end{remark}

\begin{remark}
Coefficient matrices of polynomial systems are typically very sparse.
An efficient implementation of Algorithm~\ref{a:binomialDetect} has to
take this into account.
\end{remark}

\begin{remark}\label{r:multiply}
Algorithm~\ref{a:binomialDetect} could also be written completely in
the polynomial ring without any quotients.  Then in each new degree,
one would have to consider the coefficient matrix of $F_\text{min}$
together with all binomials of degree $d$ in the ideal~$\<B\>$.  This
list grows very quickly and so does the list of monomials appearing in
these binomials.  Thus it is not only more elegant to work with the
quotient, but also more efficient.
\end{remark}

To implement Algorithm~\ref{a:binomialDetect} completely without
Gröbner bases some refinements are necessary.  Simply using
$R = \kk[x_1,\dots,x_n] / B$ in \textsc{Macaulay2} will make it
compute a Gröbner basis of~$B$ to effectively work with the quotient.
For our purposes, however, this is not necessary.

\begin{prop}
Algorithm~\ref{a:binomialDetect} can be implemented without Gröbner
bases.
\end{prop}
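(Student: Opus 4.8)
The plan is to pinpoint which operations in Algorithm~\ref{a:binomialDetect} appear to need a Gröbner basis and to replace each by plain linear algebra over $\kk$, exploiting that the ideal $\langle B\rangle$ is always generated by \emph{homogeneous binomials}. The only steps referring to the quotient $R=\kk[x_1,\dots,x_n]/\langle B\rangle$ are the computation of the reduced row echelon form of the coefficient matrix of the images of $F_{\min}$ (step~\ref{it:sublist}.\ref{it:computeEchelon}), the extraction of binomial preimages in step~\ref{it:sublist}.\ref{it:findBinomials}, and the passage to images of $F$ in step~\ref{it:sublist}.\ref{it:redefine}. Plugging $R=\kk[x_1,\dots,x_n]/B$ into \textsc{Macaulay2} forces a Gröbner computation only because the system does not know $B$ is binomial; I will argue that, once that structure is used, all three operations are combinatorial.

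First I would record the structural fact that does all the work: for homogeneous binomials $B$, the graded piece $\langle B\rangle_d$ is spanned over $\kk$ by the finite, explicitly enumerable set $\{x^\gamma b : b\in B,\ \deg(x^\gamma b)=d\}$, directly from the definition of the ideal generated by $B$ --- no Buchberger completion is involved. Hence $R_d$ is the cokernel of the matrix whose rows are these monomial multiples. Every such row has at most two nonzero entries, and a $\kk$-subspace spanned by such rows has a reduced row echelon basis again consisting of rows with at most two nonzero entries (the ``binomial-friendliness'' of Gauss--Jordan, a linear-algebra shadow of the fact used in Section~\ref{sec:gb-free} that an S-pair of binomials is a binomial; it also follows from the description of quotients modulo binomials in~\cite[Section~1]{eisenbud96:_binom_ideal}). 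This says precisely that the degree-$d$ monomials split into equivalence classes under $m_1\sim m_2\iff m_1-\lambda m_2\in\langle B\rangle$, together with a possibly empty ``null'' class of monomials lying in $\langle B\rangle$ itself, and that a $\kk$-basis of $R_d$ is obtained by choosing one representative per non-null class. Working in $R$ therefore amounts to maintaining this partition, the scalar relating each monomial to its representative, and the null flag.

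Next I would explain how to maintain the partition incrementally, which also addresses the efficiency concern of Remark~\ref{r:multiply}. When binomials $B'$ of degree $e$ are appended to $B$ in step~\ref{it:sublist}.\ref{it:findBinomials}, each $b=x^a-\lambda x^{b'}\in B'$ yields, for every monomial $x^\gamma$, the instruction ``$x^{\gamma+a}\equiv\lambda x^{\gamma+b'}$'', and each pure monomial $x^c\in B'$ yields ``$x^{\gamma+c}$ is null''; these are processed lazily, when the relevant degree first becomes current. A union--find with path-scalars performs each merge in near-constant time, and a merge that would close a cycle whose coefficient product is not $1$, or that would glue a monomial to a null one, marks the whole class null. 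Since degrees are processed in increasing order and $\langle B'\rangle$ affects only degrees $\ge e$, finished degrees are never disturbed. With the partition available: step~\ref{it:sublist}.\ref{it:computeEchelon} is ordinary Gauss--Jordan on a matrix whose columns index the non-null degree-$d$ classes (and step~\ref{it:sublist}.\ref{it:NO} merely inspects its output); step~\ref{it:sublist}.\ref{it:findBinomials} reads each echelon row with one or two nonzero entries and substitutes for each class index a representative monomial, suitably scaled, producing a genuine binomial (or monomial) in $\kk[x_1,\dots,x_n]$; and step~\ref{it:sublist}.\ref{it:redefine} rewrites each monomial of each $f\in F$ as a scalar times its class representative, discarding null classes.

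The main obstacle, and the step I expect to need the most care, is proving that this combinatorial model of the quotient is \emph{complete}: that the union--find partition on degree-$d$ monomials, built solely from monomial multiples of the binomials currently in $B$ with cycle-inconsistency handled as above, reproduces $\kk[x_1,\dots,x_n]/\langle B\rangle$ in degree $d$ exactly, including every monomial that has become zero. This is where the structure theory of binomial ideals enters~\cite[Section~1]{eisenbud96:_binom_ideal}: $\langle B\rangle$, being binomially generated, is as a $\kk$-vector space spanned by binomials and monomials, so its degree-$d$ part is exactly the span of the rows listed above, and the reduced row echelon form of that span is binomial --- which is what licenses the class/null-class bookkeeping. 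Once this is established, correctness of the Gröbner-free implementation reduces to the already-proved correctness of Algorithm~\ref{a:binomialDetect}, since only the quotient arithmetic has been replaced, by an equivalent computation.
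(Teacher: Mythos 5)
Your proposal is correct and takes essentially the same approach as the paper: replace the quotient arithmetic by the equivalence relation on monomials viewed as a graph, compute its degree-$d$ classes with a graph-search/union--find algorithm, and use the (non-null) components as a $\kk$-basis of each graded piece of~$R$. You work out the scalar bookkeeping, the null classes, and the completeness of the single-step relation in more detail than the paper, which delegates these points to \cite[Proposition~1.11]{eisenbud96:_binom_ideal}.
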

\begin{proof}
The critical step is \ref{it:sublist}.\ref{it:redefine}, when the
algorithm reduces $F$ modulo the binomials already found.  For the
following step \ref{it:sublist}.\ref{it:computeEchelon} elements of
$F_\text{min}$ need to be written in terms of a basis of the
finite-dimensional vector space~$R_{\deg (F_\text{min})}$ of degree
$\deg (F_\text{min})$ monomials modulo the binomials in~$B$.  The
equivalence relation introduced in the beginning of this section can
also be thought of as a graph on monomials, and thus these reductions
can be carried out with graph enumeration algorithms like breadth
first search.  Restricting to monomials of degree
$\deg (F_\text{min})$, the connected components are a vector space
basis of $R_{\deg (F_\text{min})}$ and can thus be used to gather
coefficients in step~\ref{it:sublist}.\ref{it:redefine}.
\end{proof}

\begin{remark}
The feasibility of graph-theoretic computations in cases where Gröbner
bases cannot be computed has been demonstrated in
\cite{kahle12:positive-margins}.  Example~4.9 there contains a
binomial ideal whose Gröbner basis cannot be computed, but whose
non-radicality was proved using a graph-theoretic computation.  This
yielded a negative answer to the question of radicality of conditional
independence ideals in algebraic statistics.
\end{remark}

\begin{remark}
Using Gröbner bases one represents each connected component of the
graph defined by $\<B\>$ by its least monomial with respect to the
term order.  Our philosophy is that this is not necessary: one should
work with the connected components per se.  Why bother with picking
and finding a specific representative in each component if any
representative works?  In an implementation one could choose a data
structure that for each monomial stores an index of the connected
component it belongs to.
\end{remark}

\begin{remark}\label{r:many-examples}
It is trivial to generate classes of examples where Gr\"obner bases
methods fail, but Algorithm~\ref{a:binomialDetect} is quick.  For
example, take any set of binomials whose Gr\"obner basis cannot be
computed and add any polynomial in the ideal.
Algorithm~\ref{a:binomialDetect} immediately goes to work on reducing
the polynomial modulo the binomials, while any implementation of
Gr\"obner bases embarks into its hopeless task.
\end{remark}

\begin{remark}\label{r:no-gb-wanted}
Remark~\ref{r:many-examples} highlights the Gr\"obner-free spirit of
our method.  The Gr\"obner basis of an ideal contains much more
information than binomiality.  One should avoid expensive computation
to decide this simple question.
\end{remark}

\section{Heuristics for the inhomogeneous case}
\label{sec:inhomog}
The ideals one encounters in chemical reaction network theory are
often not homogeneous, so that the results from
Section~\ref{sec:homogenous-case} do not apply.  The first idea one
may have for the inhomogeneous case is to work with some (partial)
homogenization.  Gr\"obner bases are quite robust in relation to
homogenization.  For example, to compute a Gr\"obner basis of a
non-homogeneous ideal it suffices to homogenize the generators,
compute a Gr\"obner basis of this homogeneous ideal, and then
dehomogenize.  Although the intermediate homogeneous ideal is
generally not equal to the homogenization of the original ideal, the
dehomogenized Gr\"obner basis is a Gr\"obner basis of the
dehomogenized ideal \cite[Exercise~1.7.8]{singularbook}.

Unfortunately the notion of binomiality does not lend itself to that
kind of tricks.  Geometrically, homogenizing (all polynomials in) an
ideal yields the projective closure and dehomogenizing restricts to
one affine piece.  Homogenizing only the generators creates extra
components \emph{at infinity} and these components need not be
binomial.  Even if they are binomial, the intersection need not be
binomial (see also~\cite[Problem~17.1]{kahle11mesoprimary}).  This is
the case in the following example.
\begin{example}\label{ex:homogGensNO}
The ideal $\<ab-x, ab - y, x+y+1\> \subset \kk[a,b,x,y]$ is binomial
as it equals $\<2y+1, 2x+1, 2ab + 1\>$.  Homogenizing the generators,
however, yields the non-binomial ideal $\<ab-xz, ab-yz, x+y+z\>$.
\end{example}

We now present some alternatives that do not give complete answers but
are quick to check.  They can be applied before resorting to an
expensive Gr\"obner basis computation.

The quickest (but least likely to be successful) approach is to try
linear algebraic manipulations of the given polynomials.  Equivalently
one applies row operations to the coefficient matrix, for instance,
computing the reduced row echelon form.  If it has a partitioning
kernel basis, then the ideal is binomial and all non-binomial
generators are $\kk$-linear combinations of the binomials.  While it
may seem very much to ask for this, it does happen for the family of
networks in \cite[Section~4]{MillanToricSteady}.

If just linear algebra is not successful, one can homogenize the
generators and run Algorithm~\ref{a:binomialDetect}.  If the resulting
homogeneous ideal comes out binomial, then the original ideal was
binomial by the following simple fact, proven for instance
in~\cite[Corollary~A.4.16]{singularbook}.
\begin{prop}\label{p:homogguess}
Let $I \subset \kk[x_1,\dots,x_n]$ be an ideal and
$I' \subset \kk[x_0,x_1,\dots,x_n]$ the homogeneous ideal generated by
the homogenizations of the generators of~$I$ (using variable $x_0)$.
Then $I$ is generated by the dehomogenization of any generating set
of~$I'$.
\end{prop}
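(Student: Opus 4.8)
The plan is to use that dehomogenization with respect to $x_0$ is a $\kk$-algebra homomorphism that undoes homogenization. Write $I = \langle f_1,\dots,f_s\rangle$, let $f_i^h \in \kk[x_0,x_1,\dots,x_n]$ be the homogenization of $f_i$ with respect to $x_0$, so that $I' = \langle f_1^h,\dots,f_s^h\rangle$ by definition, and let $\varphi\colon \kk[x_0,x_1,\dots,x_n]\to\kk[x_1,\dots,x_n]$ be the substitution $x_0\mapsto 1$, $x_j\mapsto x_j$. For $g$ in the larger ring I write $g^a := \varphi(g) = g(1,x_1,\dots,x_n)$; this is exactly the dehomogenization. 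First I would record the one identity that is needed: $\varphi(f_i^h) = f_i$ for each $i$. This is immediate from $f_i^h = x_0^{\deg f_i}\, f_i(x_1/x_0,\dots,x_n/x_0)$, since substituting $x_0 = 1$ recovers $f_i$ exactly; here one uses that $x_0$ is a variable not occurring in $f_i$, so no cancellation of powers of $x_0$ takes place.

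Now fix an arbitrary generating set $g_1,\dots,g_t$ of $I'$; the claim is $I = \langle g_1^a,\dots,g_t^a\rangle$, and I would establish the two inclusions separately, each time applying the ring homomorphism $\varphi$ to an ideal-membership relation. For $\langle g_1^a,\dots,g_t^a\rangle \subseteq I$: every $g_j$ lies in $I' = \langle f_1^h,\dots,f_s^h\rangle$, so $g_j = \sum_i p_{ij} f_i^h$ for some $p_{ij}$ in the larger ring; applying $\varphi$ and using $\varphi(f_i^h) = f_i$ gives $g_j^a = \sum_i p_{ij}^a f_i \in I$. For the reverse inclusion $I \subseteq \langle g_1^a,\dots,g_t^a\rangle$: every $f_i^h$ lies in $I' = \langle g_1,\dots,g_t\rangle$, so $f_i^h = \sum_j q_{ij} g_j$ for some $q_{ij}$; applying $\varphi$ gives $f_i = \varphi(f_i^h) = \sum_j q_{ij}^a g_j^a$, so each generator $f_i$ of $I$ lies in $\langle g_1^a,\dots,g_t^a\rangle$. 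Combining the two inclusions proves the proposition.

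I do not expect a genuine obstacle. The only thing to be careful about is the bookkeeping: the argument uses two different generating sets of the same ideal $I'$ — the distinguished set $\{f_i^h\}$ for the first inclusion and the arbitrary set $\{g_j\}$ for the second — and it relies entirely on $\varphi$ being a homomorphism (so that it carries $I'$-membership to membership in the appropriate ideal downstairs) together with the identity $\varphi(f_i^h) = f_i$. It is also worth noting what is \emph{not} needed: one never has to compare $I'$ with the full ideal-theoretic homogenization of $I$, which can be strictly larger, because the weaker ideal $I'$ already dehomogenizes back to $I$ exactly.
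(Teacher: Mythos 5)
Your proof is correct: the identity $\varphi(f_i^h)=f_i$ together with applying the ring homomorphism $\varphi$ to the two membership relations is exactly the standard argument, and your closing remark that one never needs to compare $I'$ with the full ideal-theoretic homogenization of $I$ is the right thing to be careful about. The paper itself gives no proof and simply cites \cite[Corollary~A.4.16]{singularbook}, where essentially this argument appears.
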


We now illustrate a phenomenon leading to failure of the above
heuristics.
\begin{example}\label{e:shinar}
Consider the network from \cite[Example~3.15]{MillanToricSteady}.  The
steady states are non-negative real zeros of the following
polynomials.
\begin{align*}
  f_1 &=-k_{12} x_{1}+k_{21} x_{2}-k_{1112} x_{1}
	x_{7}+(k_{1211}+k_{1213}) x_{9}, \\ 
  f_2 &= k_{12} x_{1}-k_{21} x_{2}-k_{23} x_{2}+k_{32} x_{3}+k_{67}
	x_{6}, \\ 
  f_3 &= k_{23} x_{2}-k_{32} x_{3}-k_{34} x_{3}-k_{89} x_{3}
	x_{7}+k_{910} x_{8}+k_{98} x_{8}, \\ 
  f_4 &= k_{34} x_{3}-k_{56} x_{4} x_{5}+k_{65} x_{6}, \\ 
  f_5 &= -k_{56} x_{4} x_{5}+k_{65} x_{6}+k_{910} x_{8}+k_{1213} x_{9},
  \\ 
  f_6 &= k_{56} x_{4} x_{5}-(k_{65}+k_{67}) x_{6}, \\ 
  f_7 &= k_{67} x_{6}-k_{1112} x_{1} x_{7}-k_{89} x_{3} x_{7}+k_{98}
	x_{8}+k_{1211} x_{9}, \\ 
  f_8 &= k_{89} x_{3} x_{7}-(k_{910}+k_{98}) x_{8}, \\ 
  f_9 &= k_{1112} x_{1} x_{7}-(k_{1211}+k_{1213}) x_{9}.
\end{align*}
The binomials $f_6$, $f_8$, and $f_9$ can be used to eliminate one of
every pair $(x_6, x_4x_5)$, $(x_8, x_3x_7)$, and $(x_9,x_1x_7)$.  We
eliminate $x_4x_5, x_8$, and~$x_9$.  It can be checked that
eliminating $x_1x_7$ instead of $x_9$ does not lead to binomials
immediately (although it leads to linear trinomials).
\begin{align*}
  f_1' &=  -k_{12} x_{1}+k_{21} x_{2}, \\
  f_2' &= k_{12} x_{1}- (k_{21} + k_{23}) x_{2}+k_{32}
	 x_{3}+ k_{67}x_6, \\
  f_3' &= k_{23} x_{2}-(k_{32}+k_{34}) x_{3}, \\
  f_4' &=  k_{34} x_{3}- k_{67}x_6, \\
  f_5' &= - k_{67}x_6
	 +\frac{k_{1112} k_{1213} x_{1}
	 x_{7}}{k_{1211}+k_{1213}}+\frac{k_{89} k_{910} x_{3}
	 x_{7}}{k_{910}+k_{98}}, \\ 
  f_7' &= k_{67}x_6 
	 -\frac{(k_{1112} k_{1213} (k_{910}+k_{98})
	 x_{1}+(k_{1211}+k_{1213}) k_{89} k_{910} x_{3})
	 x_{7}}{(k_{1211}+k_{1213}) (k_{910}+k_{98})}.
\end{align*}
Using the linear relations $f_1'$ and $f_3'$ the remaining system is
recognized to consist of only two independent binomials:
\begin{align*}
  f_2'' &= - k_{34}x_3 +
	  k_{67}x_6 \\
  f_4'' &= k_{34}x_3 -
	  k_{67}x_6, \\
  f_5'' &= - k_{67}x_6
	  +\left(\frac{k_{1112} k_{1213} k_{21}}{k_{12}
	  (k_{1211}+k_{1213})}+\frac{k_{23} k_{89}
	  k_{910}}{(k_{32}+k_{34}) (k_{910}+k_{98})}\right) x_{2} x_{7}, \\
  f_7'' & = k_{67}x_6 
	  +\left(-\frac{k_{1112} k_{1213}
	  k_{21}}{k_{12} (k_{1211}+k_{1213})}-\frac{k_{23} k_{89}
	  k_{910}}{(k_{32}+k_{34}) (k_{910}+k_{98})}\right) x_{2} x_{7}.
\end{align*}
This analysis shows that the steady state ideal under consideration
equals the binomial ideal
$\<f_1', f_2'', f_3', f_5'', f_6, f_8, f_9\>$.  The Gr\"obner basis
computation in \cite[Example~3.15]{MillanToricSteady} also yields the
result, but it is arguably less instructive.  Note also that naive
homogenization does not yield binomiality.  The element $f_2$ is
linear.  After homogenization, Algorithm~\ref{a:binomialDetect} would
pick only this element as $F_\text{min}$ and stop since it is not a
binomial.
\end{example}

The effect in Example~\ref{e:shinar} motivates our final method: term
replacements using known binomials.  We expect this to be very useful
in applications from system biology for the following reasons.
\setlist{nolistsep}
\begin{itemize}
\item It often happens that non-binomial generators are linear
combinations of binomials as in Example~\ref{e:shinar} where
$f_1 = f_1' + f_9$.
\item Steady state ideals of networks with enzyme-substrate complexes
always have some binomial generators.  These complexes are produced by
only one reaction and thus their rate of change is binomial.
\item In MAPK networks, which describe certain types of cellular
signaling, one often finds binomials of the form $k x_ax_b - k'x_c$.
\item Frequently binomials in steady state ideals are linear.
Equivalently some of the concentrations are equal up to a scaling
(which may depend on kinetic parameters).  This happens for all
examples in~\cite{millan2014mapk}.
\end{itemize}

We now illustrate term replacements in a larger example which comes
from the network for ERK activation embedded in two negative feedback
loops (see ~\cite[Section~5]{fein-043} for pointers to the relevant
biology).
\begin{example}\label{e:ERK}
Consider the following steady state ideal generated by 29 polynomials.
\footnotesize
\begin{align*}
  f_{1} &= -k_{1} x_{1} x_{2}+k_{2} x_{3}+k_{6} x_{6}, \qquad 
	  f_{2} = -k_{1} x_{1} x_{2}+k_{2} x_{3}+k_{3} x_{3}, \qquad 
	  f_{3} = k_{1} x_{1} x_{2}-k_{2} x_{3}-k_{3} x_{3}, \\ 
  f_{4} &= k_{11} x_{10}+k_{12} x_{10}+k_{38} x_{25}+k_{42} x_{27}+k_{3} x_{3}-k_{37} x_{18} x_{4}, \\
	&\qquad -k_{4}x_{4} x_{5} +k_{5} x_{6}-k_{7} x_{4} x_{7}+k_{8} x_{8}+k_{9} x_{8}-k_{10} x_{4} x_{9}\\
  f_{5} &= k_{14} x_{12}+k_{15} x_{12}+k_{17} x_{13}+k_{18} x_{13}+k_{35} x_{24}+k_{36} x_{24}+k_{41} 
	  x_{27}+k_{42} x_{27}-k_{13} x_{11} x_{5}-k_{34} x_{16} x_{5} \\
	&\qquad -k_{40} x_{26} x_{5}-k_{4} x_{4}
	  x_{5}+k_{5} x_{6}+k_{6} x_{6}-k_{16} x_{5} x_{9}, \\
  f_{6} &= k_{4} x_{4} x_{5}-k_{5} x_{6}-k_{6} x_{6}, \qquad 
	  f_{7} = k_{18} x_{13}-k_{7} x_{4} x_{7}+k_{8} x_{8}, \qquad
	  f_{8} = k_{7} x_{4} x_{7}-k_{8} x_{8}-k_{9} x_{8}, \\
  f_{9} &= k_{11} x_{10}+k_{15} x_{12}+k_{17} x_{13}+k_{9} x_{8}-k_{10} x_{4} x_{9}-k_{16} x_{5}
	  x_{9}, \qquad 
	  f_{10} = -k_{11} x_{10}-k_{12} x_{10}+k_{10} x_{4} x_{9}, \\
  f_{11} &= k_{12} x_{10}+k_{14} x_{12}-k_{19} x_{11} x_{14}+k_{20} x_{15}+k_{21} x_{15}-k_{22}
	   x_{11} x_{16}+k_{23} x_{17}+k_{24} x_{17}-k_{13} x_{11} x_{5}, \\
  f_{12} &= -k_{14} x_{12}-k_{15} x_{12}+k_{13} x_{11} x_{5}, \qquad
	   f_{13} = -k_{17} x_{13}-k_{18} x_{13}+k_{16} x_{5} x_{9}, \\
  f_{14} &= -k_{19} x_{11} x_{14}+k_{20} x_{15}+k_{30} x_{21}+k_{36} x_{24}, \qquad
	   f_{15} = k_{19} x_{11} x_{14}-k_{20} x_{15}-k_{21} x_{15}, \\
  f_{16} &= k_{21} x_{15}-k_{22} x_{11} x_{16}+k_{23} x_{17}-k_{28} x_{16} x_{19}+k_{27} x_{20}+k_{29}
	   x_{21}+k_{33} x_{23}+k_{35} x_{24}-k_{34} x_{16} x_{5}, \\
  f_{17} &= k_{22} x_{11} x_{16}-k_{23} x_{17}-k_{24} x_{17}, \\
  f_{18} &= k_{24} x_{17}-k_{25} x_{18} x_{19}+k_{26} x_{20}-k_{31} x_{18} x_{22}+k_{32} x_{23}+k_{38}
	   x_{25}+k_{39} x_{25} \\
	&\qquad -k_{43} x_{18} x_{28}+k_{44} x_{29}+k_{45} x_{29}-k_{37} x_{18} x_{4}, \\
  f_{19} &= -k_{46} x_{19}-k_{28} x_{16} x_{19}-k_{25} x_{18} x_{19}+k_{26} x_{20}+k_{27} x_{20}+k_{29}
	   x_{21}+k_{30} x_{21}+k_{45} x_{29}, \\
  f_{20} &= k_{25} x_{18} x_{19}-k_{26} x_{20}-k_{27} x_{20}, \qquad
	   f_{21} = k_{28} x_{16} x_{19}-k_{29} x_{21}-k_{30} x_{21}, \\
  f_{22} &= -k_{31} x_{18} x_{22}+k_{32} x_{23}+k_{33} x_{23}, \qquad
	   f_{23} = k_{31} x_{18} x_{22}-k_{32} x_{23}-k_{33} x_{23}, \\
  f_{24} &= -k_{35} x_{24}-k_{36} x_{24}+k_{34} x_{16} x_{5}, \qquad
	   f_{25} = -k_{38} x_{25}-k_{39} x_{25}+k_{37} x_{18} x_{4}, \\
  f_{26} &= k_{39} x_{25}+k_{41} x_{27}-k_{40} x_{26} x_{5}, \qquad
	   f_{27} = -k_{41} x_{27}-k_{42} x_{27}+k_{40} x_{26} x_{5} \\
  f_{28} &= k_{46} x_{19}-k_{43} x_{18} x_{28}+k_{44} x_{29}, \qquad
	   f_{29} = k_{43} x_{18} x_{28}-k_{44} x_{29}-k_{45} x_{29}.
\end{align*}
\normalsize

After some obvious factorization, the following elements are
binomials: $f_{2}$, $f_{6}$, $f_{8}$, $f_{10}$, $f_{12}$, $f_{13}$,
$f_{15}$, $f_{17}$, $f_{20}$, $f_{21}$, $f_{22}$, $f_{23}$, $f_{24}$,
$f_{25}$, $f_{27}$, $f_{29}$.  The system has seven conservation
relations, which can be found by linear algebra.  According to our
strategy to use binomials to simplify the system, we eliminate, if
possible, non-binomials using the conservation relations.  This is not
always possible, as some of the conservation relations stem from
duplicate equations like $f_2 = -f_3$.  We eliminate
$f_3,f_4,f_5,f_8,f_9,f_{18}$, and~$f_{19}$.  The remaining
non-binomial part consists of $f_1$, $f_7$, $f_{11}$, $f_{14}$,
$f_{16}$, $f_{26}$, and~$f_{28}$.  Dividing by reaction constants,
each of the binomials is of the form $x_i = K x_jx_l$ for some
rational expression $K$ involving only reaction constants.  Using
these in the non-binomials yields 

\footnotesize
\begin{align*}
  f'_{1} &= -\frac{k_{1} k_{3} x_{1} x_{2}}{k_{2}+k_{3}}+\frac{k_{4}
	  k_{6} x_{4} x_{5}}{k_{5}+k_{6}}, \qquad 
  f'_{7} = -\frac{k_{7} k_{9} x_{4} x_{7}}{k_{8}+k_{9}}+\frac{k_{16} k_{18} x_{5} x_{9}}{k_{17}+k_{18}},
  \\
  f'_{11} & = 
	   - \frac{ k_{13} k_{15} x_{11} x_{5}}{k_{14}+k_{15}} + \frac{k_{10} k_{12} x_{4} x_{9}}{k_{11}+k_{12}}, \\
  f'_{14} &= -\frac{k_{19} k_{21} x_{11} x_{14}}{k_{20}+k_{21}}+\frac{k_{28} k_{30} x_{16} x_{19}}{k_{29}+k_{30}}+\frac{k_{34}
	   k_{36} x_{16} x_{5}}{k_{35}+k_{36}}, \\
  f'_{16} &= \frac{k_{19} k_{21} x_{11} x_{14}}{k_{20}+k_{21}}-\frac{k_{22} k_{24} x_{11} x_{16}}{k_{23}+k_{24}}-k_{28}
	   x_{16} x_{19}+\frac{k_{28} k_{29} x_{16}
	   x_{19}}{k_{29}+k_{30}}+\frac{k_{25} k_{27} x_{18} x_{19}}{k_{26}+k_{27}} \\ 
	& \qquad + \frac{k_{31} k_{33} x_{18}  x_{22}}{k_{32}+k_{33}} -k_{34} x_{16} x_{5}+\frac{k_{34} k_{35} x_{16} x_{5}}{k_{35}+k_{36}},
  \\
  f'_{26} &= \frac{k_{37} k_{39} x_{18} x_{4}}{k_{38}+k_{39}}-\frac{k_{40} k_{42} x_{26} x_{5}}{k_{41}+k_{42}},
	   \qquad
	   f'_{28} = k_{46} x_{19}-\frac{k_{43} k_{45} x_{18} x_{28}}{k_{44}+k_{45}}.
\end{align*}
\normalsize

In particular, we find five new binomials $f'_1$, $f'_7$, $f'_{11}$,
$f'_{26}$, and~$f'_{28}$.  Adding $f'_{14}$ to $f'_{16}$ yields the
trinomial
\begin{equation*}
f_{16}'' = -\frac{k_{22} k_{24} x_{11} x_{16}}{k_{23}+k_{24}} +
\frac{k_{25} k_{27} x_{18} x_{19}}{k_{26}+k_{27}} + 
\frac{k_{31} k_{33} x_{18}  x_{22}}{k_{32}+k_{33}}.
\end{equation*}
Consequently, the original system is equivalent to a system consisting
of 27 binomials and two trinomials of a relatively simple shape.  For
comparison we computed the Gr\"obner basis in \textsc{Macaulay2}
with rational functions in the reaction rates as coefficients.
Although the computation finished in just 18 minutes, the result is
practically unusable.  The Gr\"obner basis consists of 169 elements
each of it with huge rational functions as coefficients.  The
structure that we observed above is completely lost.
\end{example}

The lesson learned from Example~\ref{e:ERK} is that term replacements
using binomials are useful in solving a polynomial system, even if the
end result is not binomial.  Especially in the non-homogeneous case
where the notion of minimal generators is absent, computations with
the \textsc{Binomials} package~\cite{kahle11:binom-jsag} in
\textsc{Macaulay2}~\cite{M2} can probably only assist, but not
automatically do useful reductions.  For example, a natural general
choice would be to replace higher degree monomials by lower degree
ones, but this would not directly reveal binomiality in
Example~\ref{e:shinar}.

Finally, we summarize a possible strategy to deal with inhomogeneous
ideals.  Example~\ref{ex:homogGensNO}, for instance, is solved already
by item~\ref{it:try1}, but also by item~\ref{it:try3}.
\begin{rec}\label{recipe}$ $\\[-2ex]
\begin{enumerate}[label=\arabic*),ref=\arabic*]
\item\label{it:try1} Try linear algebra and
Proposition~\ref{p:pkbvsBin}.
\item Homogenize the given ideal and run
Algorithm~\ref{a:binomialDetect}.  If the algorithm returns binomials,
then by Proposition~\ref{p:homogguess} the original (dehomogenized)
ideal is binomial.  The homogenization should be carried out after
linear algebra reductions to possibly detect homogeneity already at an
earlier stage (compare Example~\ref{e:noConverse}).
\item\label{it:try3} If Algorithm~\ref{a:binomialDetect} returns a
negative answer, dehomogenize and use known binomials for term
replacements (as in Example~\ref{e:ERK}).  Potentially homogenize
again with an enlarged generating set.
\item Compute a reduced Gr\"obner basis.
\end{enumerate}
\end{rec}

%\bibliographystyle{mathbibl}
%\bibliography{detect}
% \enlargethispage{3ex}

\end{document}